\documentclass{article}

\usepackage{amsthm}
\usepackage{amsmath, amssymb}
\usepackage{hyperref}
\usepackage{tikz}
\usetikzlibrary{matrix}
\usepackage[matrix, arrow, curve]{xy}

\newtheorem{Thm}{Theorem}[section]
\newtheorem{Def}[Thm]{Definition}

\newtheorem{Kor}[Thm]{Corollary}

\newtheorem{Assumption}[Thm]{General Assumptions}

\title{Vector bundles over certain Koras-Russell threefolds of the third kind}

\author{Tariq Syed\\
Mathematisches Institut\\
Heinrich-Heine-Universit{\"a}t D{\"u}sseldorf\\
Universit{\"a}tsstra{\ss}e 1\\
40225 D{\"u}sseldorf, Germany\\
tariq.syed@gmx.de}

\date{\today} 

\begin{document}

\maketitle

\begin{abstract}
Let $k$ be an algebraically closed base field of characteristic $0$ and let $\alpha_{1}, \alpha_{2}, \alpha_{3}, d \geq 2$ be integers such that $\alpha_{1}, \alpha_{2}, \alpha_{3}$ are pairwise coprime and $gcd (\alpha_{1},d-1) = 1$. Then consider the Koras-Russell threefold $Y := \{ x + x^d y^{\alpha_{1}} + z^{\alpha_{2}} + t^{\alpha_{3}} = 0\} \subset \mathbb{A}^{4}_{k}$. We prove that the Chow groups $CH^{i}(Y)$ are trivial for $i=1,2,3$ and therefore all algebraic vector bundles over $Y$ are trivial. If $\alpha_{1}$ is odd, we also prove that the Chow-Witt groups $\widetilde{CH}^{i}(Y, \mathcal{L})$ are trivial for $i=1,2,3$ and any line bundle $\mathcal{L}$ over $Y$.\\
2020 Mathematics Subject Classification: 13C10, 14C25, 14F42, 19E15.\\
Keywords: Chow groups, Chow-Witt groups, Koras-Russell threefolds.
\end{abstract}

\tableofcontents

\section{Introduction}

Koras-Russell threefolds were first discovered in the context of the linearization problem for $\mathbb{G}_{m}$-actions on $\mathbb{A}^3_{\mathbb{C}}$ (cf. \cite[Theorem 4.1]{KR}). These threefolds are topologically contractible smooth affine $\mathbb{C}$-varieties of dimension $3$ which are not isomorphic to $\mathbb{A}^3_{\mathbb{C}}$ and have since been studied in the several other contexts such as the Zariski cancellation problem and motivic homotopy theory. They can roughly be divided into three types via their Makar-Limanov invariants (cf. \cite{KML}) and, in particular, via $\mathbb{G}_{a}$-actions on them:\\
First of all, any variety which is isomorphic to a subvariety of $\mathbb{A}^4_{\mathbb{C}}$ of the form
\begin{center}
$\{x + x^d y + z^{\alpha_2} + t^{\alpha_3}=0\} \subset \mathbb{A}^4_{\mathbb{C}}= Spec(\mathbb{C}[x,y,z,t])$,
\end{center}
where $d, \alpha_{2}, \alpha_{3} \geq 2$ and $gcd(\alpha_{2},\alpha_{3})=1$, is called a Koras-Russell threefold of the first kind. Secondly, any variety which is isomorphic to a subvariety of $\mathbb{A}^4_{\mathbb{C}}$ of the form
\begin{center}
$\{x + {(x^d + z^{\alpha_{2}})}^{l} y + t^{\alpha_3}=0\} \subset \mathbb{A}^4_{\mathbb{C}}= Spec(\mathbb{C}[x,y,z,t])$,
\end{center}
where $l \geq 1$, $d, \alpha_{2}, \alpha_{3} \geq 2$ and $gcd(\alpha_{2},d\alpha_{3})=1$, is called a Koras-Russell threefold of the second kind.\\
Following the procedure outlined in \cite[Theorem 4.1]{KR}, one may also construct Koras-Russell threefolds which are not isomorphic to any Koras-Russell threefolds of the first or second kind; we will refer to such varieties as Koras-Russell threefolds of the third kind. In contrast to Koras-Russell threefolds of the first or second kind, Koras-Russell threefolds of the third kind do not admit non-trivial $\mathbb{G}_{a}$-actions.\\
In \cite[Question 7.12]{KR}, M. Koras and P. Russell posed the following question:\\\\
\textbf{Question 1.} If $Y$ is a Koras-Russell threefold, are all algebraic vector bundles over $Y$ trivial?\\

More generally, one may ask whether algebraic vector bundles over topologically contractible smooth affine $\mathbb{C}$-varieties are always trivial or not; this problem is also called the generalized Serre question in the literature (cf. \cite[Question 6]{AO}). The generalized Serre question has a positive answer in dimensions $\leq 2$, but is completely open in higher dimensions (cf. \cite[Section 5.5.2]{AO}). A first major result on vector bundles over Koras-Russell threefolds was proven by M. P. Murthy, who proved that algebraic vector bundles over Koras-Russell threefolds of the first kind are always trivial (cf. \cite[Corollary 3.8]{M}). Later Koras-Russell threefolds were revisited in the context of motivic homotopy theory. Generalizing M. P. Murthy's results, M. Hoyois, A. Krishna and P. A. {\O}stv{\ae}r proved that algebraic vector bundles over Koras-Russell threefolds of the first or second kind are always trivial (cf. \cite[Corollary 3.7]{HKO}). No analogous results were proven on Koras-Russell threefolds of the third kind.\\
In this paper, we consider the varieties

\begin{center}
$Y (d, \alpha_{1}, \alpha_{2}, \alpha_{3}) := \{ x + x^d y^{\alpha_{1}} + z^{\alpha_{2}} + t^{\alpha_{3}} = 0\} \subset \mathbb{A}^{4}_{k} = Spec (k[x,y,z,t])$,
\end{center}

where $k$ is an algebraically closed field of characteristic $0$ and $d, \alpha_{1}, \alpha_{2}, \alpha_{3} \geq 2$ are integers such that $\alpha_{1}, \alpha_{2}, \alpha_{3}$ are pairwise coprime and $gcd (\alpha_{1},d-1) = 1$. If $k = \mathbb{C}$, the variety $Y (d, \alpha_{1}, \alpha_{2}, \alpha_{3})$ is a Koras-Russell threefold and already appeared in \cite[Example 7.6(2)]{KR}; if furthermore $\alpha_{1}, \alpha_{2}, \alpha_{3} \gg 0$, then $Y (d, \alpha_{1}, \alpha_{2}, \alpha_{3})$ has non-negative logarithmic Kodaira dimension by \cite[Proposition 6.5]{KR} and therefore has to be a Koras-Russell threefold of the third kind (Koras-Russell threefolds of the first and second kind have logarithmic Kodaira dimension $- \infty$). We study the Chow groups of $Y (d, \alpha_{1}, \alpha_{2}, \alpha_{3})$ and prove the following result (cf. Theorems \ref{Chow-3}, \ref{Chow-1-2} and Corollary \ref{VB} in the text):\\\\
\textbf{Theorem 2.} One has $CH^{i}(Y (d, \alpha_{1}, \alpha_{2}, \alpha_{3}))=0$ for $i=1,2,3$. In particular, all algebraic vector bundles over $Y (d, \alpha_{1}, \alpha_{2}, \alpha_{3})$ are trivial.\\

The statement on vector bundles in Theorem 2 follows from the vanishing statement on Chow groups together with well-known classification results on algebraic vector bundles over smooth affine threefolds over algebraically closed fields (cf. \cite[Theorem 2.1(iii)]{KM}, \cite[Theorem 1]{AF}) and answers Question 1 in case $k=\mathbb{C}$ and $Y = Y (d, \alpha_{1}, \alpha_{2}, \alpha_{3})$.\\
Note that the computation of Chow groups or more general cohomology groups of topologically contractible smooth affine $\mathbb{C}$-varieties is a difficult problem in its own right. M. Hoyois, A. Krishna and P. A. {\O}stv{\ae}r proved in \cite[Theorem 4.2]{HKO} that Koras-Russell threefolds of the first or second kind are stably $\mathbb{A}^1$-contractible; A. Dubouloz and J. Fasel even proved that Koras-Russell threefolds of the first kind are $\mathbb{A}^1$-contractible (cf. \cite[Theorem 1]{DF}). This means in particular that any Koras-Russell threefold of the first or second kind has the cohomology of $Spec(\mathbb{C})$ for any cohomology theory representable in the stable motivic homotopy category $\mathcal{SH}(\mathbb{C})$ of $\mathbb{P}^1$-spectra (e.g., Chow groups or Chow-Witt groups). It is completely open whether Koras-Russell threefolds of the third kind are stably $\mathbb{A}^1$-contractible as well. A necessary condition for a Koras-Russell threefold $Y$ of the third kind to be stably $\mathbb{A}^1$-contractible is that $CH^{i}(Y) = 0$ for $i=1,2,3$. Theorem 2 thus verifies this necessary condition for $Y = Y (d, \alpha_{1}, \alpha_{2}, \alpha_{3})$. Analogously, another necessary condition for a Koras-Russell threefold $Y$ of the third kind to be stably $\mathbb{A}^1$-contractible is that its Chow-Witt groups $\widetilde{CH}^{i}(Y, \mathcal{L})$ vanish for $i=1,2,3$ and any line bundle $\mathcal{L}$ over $Y$. We thus also study the Chow-Witt groups $\widetilde{CH}^{i}(Y (d, \alpha_{1}, \alpha_{2}, \alpha_{3}), \mathcal{L})$ of $Y (d, \alpha_{1}, \alpha_{2}, \alpha_{3})$ and prove the following statement (cf. Theorem \ref{Chow-Witt} in the text):\\\\
\textbf{Theorem 3.} Assume $\alpha_{1}$ is odd. One has $\widetilde{CH}^{i}(Y (d, \alpha_{1}, \alpha_{2}, \alpha_{3}), \mathcal{L})=0$ for $i=1,2,3$ and any line bundle $\mathcal{L}$ over $Y (d, \alpha_{1}, \alpha_{2}, \alpha_{3})$.

\section{Preliminaries}\label{Preliminaries}\label{2}

In this section, we briefly review the results on the motivic cohomology of cyclic coverings obtained in \cite{Sy} as needed for this paper. We first explain the general setting of those results (cf. \cite[Definition 1.1 and General Assumptions 1.2]{Sy}):

\begin{Def}\label{CyclicCoverings}
Let $k$ be a field of characteristic $0$, let $X$ be an affine $k$-variety and let $f \in \mathcal{O}_{X}(X) \setminus \{0\}$ be a regular function and let $s > 1$ be an integer. Then
\begin{itemize}
\item we denote by $F_0$ the closed subscheme of $X$ defined by $f$;
\item we let $Y_{s}$ be the closed subscheme of $X \times_k \mathbb{A}^1_k$ defined by $f = u^s$, where $u$ is the variable of $\mathbb{A}^1_k$;
\item we will denote by $F$ the closed subscheme of $Y_s$ defined by $u=0$.
\end{itemize}
We call the projection $\varphi_s : Y_s \rightarrow X$ a cyclic covering of $X$ branched to order $s$ along $F_0$.
\end{Def}

We will use the notation from Definition \ref{CyclicCoverings} throughout this section. The following general assumptions are made in \cite{Sy}:

\begin{Assumption}\label{Assumptions}
We furthermore assume that
\begin{itemize}
\item $k$ is algebraically closed;
\item $u^s - f$ is prime in the polynomial rings $\mathcal{O}_{X}(X)[u]$ and $k(X)[u]$, where $k(X)$ is the field of fractions of the domain $\mathcal{O}_{X}(X)$;
\item $X$ and $F_0$ (and hence $Y_s$ by the remarks in \cite[Definition 5.1]{Z}) are smooth over $k$.
\end{itemize}
\end{Assumption}

Note that the cyclic covering $\varphi_{s}$ maps the subscheme $F \subset Y_{s}$ isomorphically onto the subscheme $F_{0} \subset X$ (cf. \cite[Section 3.1]{Sy}). With the notation from Definition \ref{CyclicCoverings} and under the General Assumptions \ref{Assumptions} above, we now list the results from \cite{Sy} needed in the proof of the main results of this paper in the next section. Let $Sm_k$ denote the category of smooth $k$-schemes. In the following statements, we will denote the motivic cohomology group of a smooth $k$-scheme $V \in Sm_k$ in bidegree $(i,j) \in \mathbb{Z}^2$ with coefficients in a fixed commutative ring $R$ by $H^{i,j}(V, R)$.

\begin{Thm}[{{\cite[Theorem 3.17]{Sy}}}]\label{Ind-Thm}
Let $(p,q) \in \mathbb{Z}^2$. Assume that
\begin{itemize}
\item[(a)] $s \in R^{\times}$
\item[(b)] there is a $\mathbb{G}_{m,k}$-action $\gamma$ on $X$, which makes $f$ a quasi-invariant of weight $d \in \mathbb{Z}$ with respect to $\gamma$ and $\langle d,s \rangle = \mathbb{Z}$
\item[(c)] $H^{p-1,q-1}(Y_{s} \setminus F,R)=0$
\end{itemize}
Then the morphism $\varphi_{s}: Y_{s} \rightarrow X$ induces an isomorphism $H^{p,q}(X,R)\cong H^{p,q}(Y_{s},R)$.
\end{Thm}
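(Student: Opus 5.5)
Since $\varphi_s$ restricts to an isomorphism $F \cong F_0$, I would compare the two Gysin (localization) long exact sequences attached to the smooth pairs $(X, F_0)$ and $(Y_s, F)$. Both $F_0 \subset X$ and $F \subset Y_s$ are smooth divisors, so purity gives
\[
\cdots \to H^{p-2,q-1}(F_0,R) \to H^{p,q}(X,R) \to H^{p,q}(X\setminus F_0,R) \to H^{p-1,q-1}(F_0,R) \to \cdots
\]
and the analogous sequence for $(Y_s,F)$. Pullback along $\varphi_s$ maps the first sequence to the second. The Gysin maps are compatible up to multiplication by the ramification index: $\varphi_s^*[F_0] = s[F]$, because $f = u^s$ on $Y_s$. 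So on the terms coming from $F_0 \cong F$, the comparison maps are multiplication by $s$. By hypothesis (a) these are isomorphisms.

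By the five lemma, the problem then reduces to understanding the map on the open complements,
\[
\varphi_s^* : H^{*,*}(X\setminus F_0,R) \to H^{*,*}(Y_s\setminus F,R).
\]
Over $X\setminus F_0$ the cover $Y_s\setminus F \to X\setminus F_0$ is a $\mu_s$-torsor, given by adjoining an $s$-th root of the unit $f$. Here I would use hypothesis (b). The $\mathbb{G}_m$-action on $X$ lifts to $Y_s\setminus F$ (after passing to an appropriate $s$-fold reparametrization). Since $\langle d,s\rangle = \mathbb{Z}$, the subgroup $\mu_s \subset \mathbb{G}_m$ acting through weight $d$ acts on $Y_s\setminus F$ exactly as the deck group $\mu_s$. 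Choosing $a,b$ with $ad+bs=1$, I would build an explicit isomorphism
\[
Y_s\setminus F \;\cong\; (X\setminus F_0)\times_{\mathbb{G}_m} \mathbb{G}_m,
\]
or more usefully identify $X\setminus F_0$ with $(Y_s\setminus F)\times^{\mu_s}\mathbb{G}_m$ up to $\mathbb{A}^1$-homotopy. Concretely, I would exhibit $\mathbb{G}_m$-equivariant isomorphisms
\[
X\setminus F_0 \;\cong\; Z\times \mathbb{G}_m, \qquad Y_s\setminus F \;\cong\; Z'\times\mathbb{G}_m,
\]
where $Z=\{f=1\}$. Under these, $\varphi_s$ corresponds to a map that is an isomorphism on $Z$-factors twisted by the $s$-th power map on $\mathbb{G}_m$. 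The $s$-th power map acts on $H^{*,*}(\mathbb{G}_m)=H^{*,*}(k)\oplus H^{*-1,*-1}(k)\cdot[t]$ as the identity on the first summand and as multiplication by $s$ on the second. Hence, after inverting $s$, the map on the open parts is an isomorphism as well.

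If this full identification is too strong, I would fall back on transfers. The composite $\varphi_{s*}\varphi_s^*$ is multiplication by the degree $s$. The composite $\varphi_s^*\varphi_{s*}$ is the sum over the $\mu_s$-translates. Using that $\mu_s$ acts through the connected group $\mathbb{G}_m$ (this is where $\langle d,s\rangle=\mathbb{Z}$ enters), each translate acts trivially on motivic cohomology by $\mathbb{A}^1$-homotopy invariance. This shows the open-part map is an isomorphism in all bidegrees, so only (a) would be needed and (c) would be a safeguard.

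Hypothesis (c) should enter at the boundary of the five lemma. The five lemma requires isomorphisms on the four neighbouring terms, and the map $H^{p-1,q-1}$ of the open parts may only be injective (or known only after a vanishing statement). With $H^{p-1,q-1}(Y_s\setminus F,R)=0$, the relevant connecting map is zero, and a four-lemma argument suffices.

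The main obstacle I expect is the comparison on the open complements. One must make precise how the lifted $\mathbb{G}_m$-action on $Y_s$ realizes the deck transformations, which acts on $u$ with a compatible weight $e$ satisfying $se\equiv d$. One must also justify that group elements in a connected group act trivially on $H^{*,*}$. I would attempt this first by the explicit $Z\times\mathbb{G}_m$ decomposition, which works since $f$ is a weight-$d$ quasi-invariant and $\gcd$ conditions allow choosing a cocharacter normalizing $f$.
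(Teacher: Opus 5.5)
There is a genuine gap at the decisive step: the claim that every deck transformation $\sigma_\xi\colon (x,u)\mapsto (x,\xi u)$, $\xi\in\mu_s$, acts as the identity on motivic cohomology. The reason you give is false. $\mathbb{G}_m$ is not $\mathbb{A}^1$-connected, and homotopy invariance does not make $k$-points of $\mathbb{G}_m$ act trivially. Already translation by $\lambda\in k^\times$ on $\mathbb{G}_m$ sends $[t]\in H^{1,1}(\mathbb{G}_m,\mathbb{Z})=\mathcal{O}(\mathbb{G}_m)^\times$ to $[t]+[\lambda]$.

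What is true follows from $H^{p,q}(\mathbb{G}_m\times V,R)\cong H^{p,q}(V,R)\oplus H^{p-1,q-1}(V,R)$, with the second summand embedded by $\beta\mapsto [t]\cup pr^*\beta$. Write $a^*\alpha=pr^*\alpha_0+[t]\cup pr^*\alpha_1$ for the action map $a$, and restrict to $t=1$ and $t=\lambda$. This gives
\[
\lambda^*\alpha=\alpha+[\lambda]\cup\alpha_1,\qquad \alpha_1\in H^{p-1,q-1}(V,R).
\]
Take $V=Y_s\setminus F$ with the lifted action $\mu\cdot(x,u)=(\gamma(\mu^s,x),\mu^d u)$. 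An element $\eta\in\mu_s$ acts as $\sigma_{\eta^d}$, so, as $\gcd(d,s)=1$, these are all the deck transformations. The error term disappears when $H^{p-1,q-1}(Y_s\setminus F,R)=0$, and this is where (c) naturally enters.

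Your account of (c) ``at the boundary of the five lemma'' cannot be right. $H^{p-1,q-1}(Y_s\setminus F,R)$ is not one of the terms adjacent to $H^{p,q}(Y_s,R)$ in the Gysin sequence; those are $H^{p-1,q}(Y_s\setminus F)$, $H^{p-2,q-1}(F)$, $H^{p,q}(Y_s\setminus F)$ and $H^{p-1,q-1}(F)$. Since (c) controls the open part only in bidegree $(p,q)$, an argument along these lines would run as follows. Injectivity of $\varphi_s^*$ on $H^{p,q}(X,R)$ comes from $\varphi_{s*}\varphi_s^*=s$ on the whole cover, and surjectivity comes from the four lemma.

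Your primary route fails independently. The orbit map $Z\times\mathbb{G}_m\to X\setminus F_0$, $(z,\lambda)\mapsto\gamma(\lambda,z)$, is a $\mu_{|d|}$-torsor, so it is an isomorphism only when $d=\pm1$. In general one only has $X\setminus F_0\cong Z\times^{\mu_{|d|}}\mathbb{G}_m$, and reparametrizing $\gamma$ only multiplies $d$.

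The compatibility of the Gysin and residue maps with $\varphi_s^*$ up to the factor $s$ is also only asserted. Since $\varphi_s$ is ramified along $F$, on normal cones it induces the $s$-th power map $N_F\to N_F^{\otimes s}\cong N_{F_0}$. So this step needs a ramified pullback formula for the Gysin triangle, not just the divisor identity $\varphi_s^*[F_0]=s[F]$.

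Your instinct that (c) is only a safeguard is in fact correct, but for a reason you did not give. For $\eta\in\mu_s$ the class $[\eta]\in H^{1,1}(k,\mathbb{Z})=k^\times$ is killed by $s$, so $[\eta]\cup\alpha_1=0$ whenever $s\in R^\times$. With this, the transfer argument works directly for the finite flat map $\varphi_s\colon Y_s\to X$. The fundamental cycle of $Y_s\times_X Y_s$ is $\sum_{\xi\in\mu_s}\Gamma_{\sigma_\xi}$, so $\varphi_s^*\varphi_{s*}=\sum_\xi\sigma_\xi^*=s$ and $\varphi_{s*}\varphi_s^*=s$, and no Gysin sequence is needed. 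As written, though, the proposal rests on an invalid justification exactly where hypothesis (b) is used.
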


The next theorem is a consequence of Theorem \ref{Ind-Thm} by means of the identification $CH^{i}(V) \otimes_{\mathbb{Z}} R \cong H^{2i,i}(V,R)$ for $V \in Sm_k$ and \cite[Vanishing Theorem 19.3]{MVW}.

\begin{Thm}[{{\cite[Theorem 3.19]{Sy}}}]\label{Chow-Tensor}
Assume that
\begin{itemize}
\item[(a)] $s \in R^{\times}$
\item[(b)] there is a $\mathbb{G}_{m,k}$-action $\gamma$ on $X$, which makes $f$ a quasi-invariant of weight $d \in \mathbb{Z}$ with respect to $\gamma$ and $\langle d,s \rangle = \mathbb{Z}$
\end{itemize}
Then $\varphi_{s}$ induces isomorphisms $CH^{i}(X)\otimes_{\mathbb{Z}}R \cong CH^{i}(Y_{s})\otimes_{\mathbb{Z}}R$ for $i \geq 0$.
\end{Thm}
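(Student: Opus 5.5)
The plan is to apply Theorem \ref{Ind-Thm} with $(p,q)=(2i,i)$ and to check hypothesis (c) by a vanishing argument. By the identification $CH^{i}(V)\otimes_{\mathbb{Z}} R \cong H^{2i,i}(V,R)$ for $V \in Sm_k$, it suffices to show that $\varphi_s$ induces isomorphisms $H^{2i,i}(X,R)\cong H^{2i,i}(Y_s,R)$ for all $i\geq 0$. The isomorphism should be the one induced by pullback $\varphi_s^*$, and this pullback is compatible with the identification above.

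Hypotheses (a) and (b) of Theorem \ref{Ind-Thm} are exactly the hypotheses (a) and (b) assumed here. So the only thing left to verify is (c), which for $(p,q)=(2i,i)$ reads
\begin{center}
$H^{2i-1,i-1}(Y_s\setminus F,R)=0$.
\end{center}
The key observation is that $Y_s\setminus F$ is a smooth $k$-scheme: it is an open subscheme of $Y_s$, and $Y_s$ is smooth by the General Assumptions \ref{Assumptions}. The Vanishing Theorem 19.3 of \cite{MVW} states that for smooth $V$ one has $H^{p,q}(V,R)=0$ whenever $p>2q$ (and also whenever $p>q+\dim V$). Here $p=2i-1$ and $q=i-1$, so $p-2q=1>0$, and the group vanishes for every $i$.

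The case $i=0$ needs a separate look. There $q=-1$, and motivic cohomology of negative weight vanishes by definition, so (c) holds in this case too. Alternatively, $CH^0$ of an irreducible variety is $\mathbb{Z}$, and $Y_s$ is irreducible because $u^s-f$ is prime, so the $i=0$ statement can be checked directly. With (c) established for all $i\geq 0$, Theorem \ref{Ind-Thm} gives $H^{2i,i}(X,R)\cong H^{2i,i}(Y_s,R)$ via $\varphi_s$, and translating back through the identification gives the stated isomorphisms of Chow groups tensored with $R$.

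I expect no real obstacle here. The only points needing care are checking that the indices fall within the range of the vanishing theorem and handling the degenerate weight case $i=0$.
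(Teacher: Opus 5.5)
Your proposal is correct and follows the route the paper indicates: apply Theorem \ref{Ind-Thm} with $(p,q)=(2i,i)$, note that hypothesis (c) becomes $H^{2i-1,i-1}(Y_s\setminus F,R)=0$, which holds by \cite[Vanishing Theorem 19.3]{MVW} since $2i-1>2(i-1)$, and then pass through the identification $CH^{i}(V)\otimes_{\mathbb{Z}}R\cong H^{2i,i}(V,R)$. Your separate treatment of $i=0$ is harmless but unnecessary, because the inequality $p>2q$ already covers it.
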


Finally, the next result from \cite{Sy} is also a consequence of Theorem \ref{Ind-Thm} and \cite[Vanishing Theorem 19.3]{MVW}:

\begin{Kor}[{{\cite[Corollary 3.24]{Sy}}}]\label{H53}
Assume that
\begin{itemize}
\item[(a)] $s \in R^{\times}$
\item[(b)] there is a $\mathbb{G}_{m,k}$-action $\gamma$ on $X$, which makes $f$ a quasi-invariant of weight $d \in \mathbb{Z}$ with respect to $\gamma$ and $\langle d,s \rangle = \mathbb{Z}$
\item[(c)] $H^{2i,i} (X,R) = 0$ for $i \geq 1$
\end{itemize}
Then the morphism $\varphi_{s}: Y_{s} \rightarrow X$ induces an isomorphism $H^{2i+1,i+1}(X,R) \cong H^{2i+1,i+1}(Y_{s},R)$ for $i \geq 1$.
\end{Kor}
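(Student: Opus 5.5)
The plan is to apply Theorem \ref{Ind-Thm} with $(p,q) = (2i+1, i+1)$ for $i \geq 1$. Hypotheses (a) and (b) of Theorem \ref{Ind-Thm} are exactly hypotheses (a) and (b) of the corollary. So everything reduces to verifying hypothesis (c), which in this bidegree reads
\[
H^{2i,i}(Y_{s} \setminus F, R) = 0 \quad \text{for } i \geq 1 .
\]
Once this is established, Theorem \ref{Ind-Thm} directly gives the claimed isomorphism $H^{2i+1,i+1}(X,R) \cong H^{2i+1,i+1}(Y_s,R)$ induced by $\varphi_s$.

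\emph{Step 1: the cohomology of $Y_s$ in bidegree $(2i,i)$.} Using the identification $CH^{i}(V)\otimes_{\mathbb{Z}} R \cong H^{2i,i}(V,R)$ for smooth $V$, hypothesis (c) says $CH^i(X)\otimes_{\mathbb{Z}} R = 0$ for $i \geq 1$. Theorem \ref{Chow-Tensor} applies under (a) and (b), so $\varphi_s$ gives $CH^i(Y_s)\otimes_{\mathbb{Z}} R \cong CH^i(X)\otimes_{\mathbb{Z}} R = 0$. Hence $H^{2i,i}(Y_s,R)=0$ for all $i\geq 1$.

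\emph{Step 2: passing to the open complement.} Since $F \subset Y_s$ is a smooth closed subscheme of codimension $1$ (it is isomorphic to $F_0$, which is smooth by the General Assumptions \ref{Assumptions}), we have the Gysin (localization) long exact sequence in motivic cohomology:
\[
H^{2i,i}(Y_s,R) \to H^{2i,i}(Y_s\setminus F,R) \to H^{2i-1,i-1}(F,R).
\]
By the Vanishing Theorem 19.3 of \cite{MVW}, $H^{p,q}(V,R)=0$ for smooth $V$ whenever $p > 2q$. Since $2i-1 > 2(i-1)$, the right-hand term vanishes. The left-hand term vanishes by Step 1, so $H^{2i,i}(Y_s\setminus F,R)=0$. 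Here $i \geq 1$ ensures that the weight $i-1$ is nonnegative.

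Applying Theorem \ref{Ind-Thm} then concludes the proof. The only point requiring care is Step 2: one must get the correct bidegree shift in the Gysin sequence for a codimension-one smooth closed subscheme, namely $(-2,-1)$, and check that the vanishing range of \cite{MVW} covers the term $H^{2i-1,i-1}(F,R)$. I expect no further obstacle.
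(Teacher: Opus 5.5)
Your proposal is correct and follows the route the paper indicates. You apply Theorem \ref{Ind-Thm} with $(p,q)=(2i+1,i+1)$ and verify $H^{2i,i}(Y_s\setminus F,R)=0$ by combining hypothesis (c), Theorem \ref{Chow-Tensor} and the localization sequence, in which $H^{2i-1,i-1}(F,R)$ vanishes by \cite[Vanishing Theorem 19.3]{MVW}; this is the same pattern the paper uses in the proof of Theorem \ref{Chow-Witt} to pass from the vanishing of $H^{2,1}(Y\setminus F,\mathbb{Z}/2\mathbb{Z})$ to $H^{3,2}(Y,\mathbb{Z}/2\mathbb{Z})=0$.
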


For any integer $j \geq 0$ and $V \in Sm_k$, let $\textbf{I}^j$ denote the sheaf on the small Nisnevich site of $V$ associated to the $j$th power of the fundamental ideal in the Witt ring and let $\overline{\textbf{I}}^{j}= \textbf{I}^{j}/\textbf{I}^{j+1}$; for more background on these sheaves, we refer the reader to \cite[\S 2.1]{F}. We conclude this section with a long exact sequence from \cite{T} involving motivic cohomology groups with $\mathbb{Z}/2\mathbb{Z}$-coefficients and sheaf cohomology groups with coefficients in the sheaves $\overline{\textbf{I}}^j$.

\begin{Thm}[{{\cite[Theorem 1.3]{T}}}]\label{Totaro}
For any smooth $k$-scheme $V$ and any integers $i,j \geq 0$, there is a long exact sequence
\begin{center}
$... \rightarrow H^{i+j,j-1} (V,\mathbb{Z}/2\mathbb{Z}) \rightarrow H^{i+j,j} (V,\mathbb{Z}/2\mathbb{Z}) \rightarrow H^{i}_{Nis}(V,\overline{\textbf{I}}^j) \rightarrow H^{i+j+1,j-1} (V,\mathbb{Z}/2\mathbb{Z}) \rightarrow ...$.
\end{center}
\end{Thm}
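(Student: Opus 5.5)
The plan is to realize the sequence as the long exact hypercohomology sequence of a distinguished triangle of complexes of Nisnevich sheaves on $V$:
\begin{center}
$\mathbb{Z}/2\mathbb{Z}(j-1) \xrightarrow{\tau} \mathbb{Z}/2\mathbb{Z}(j) \rightarrow \overline{\textbf{I}}^j[-j] \rightarrow \mathbb{Z}/2\mathbb{Z}(j-1)[1]$.
\end{center}
Here $\tau \in H^{0,1}(k,\mathbb{Z}/2\mathbb{Z}) = \mu_2(k)$ is the class of $-1$; since $\mathrm{char}(k)=0$ we have $\mu_2 \cong \mathbb{Z}/2\mathbb{Z}$, and cup product with $\tau$ defines the first map. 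Taking Nisnevich hypercohomology in degree $i+j$ gives
\begin{center}
$H^{i+j,j-1}(V,\mathbb{Z}/2\mathbb{Z}) \rightarrow H^{i+j,j}(V,\mathbb{Z}/2\mathbb{Z}) \rightarrow H^{i}_{Nis}(V,\overline{\textbf{I}}^j) \rightarrow H^{i+j+1,j-1}(V,\mathbb{Z}/2\mathbb{Z})$,
\end{center}
which is exactly the asserted sequence.

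\textbf{Step 1: identify the motivic complexes.} Let $\alpha$ be the change of sites from the étale to the Nisnevich site. By the Beilinson--Lichtenbaum conjecture, a consequence of Voevodsky's proof of the Milnor conjecture, there is a quasi-isomorphism
\begin{center}
$\mathbb{Z}/2\mathbb{Z}(j) \simeq \tau_{\leq j} R\alpha_{*}\mu_2^{\otimes j} = \tau_{\leq j} R\alpha_{*}\mathbb{Z}/2\mathbb{Z}$.
\end{center}
I would then check that under these identifications multiplication by $\tau$ becomes the canonical map
\begin{center}
$\tau_{\leq j-1} R\alpha_{*}\mathbb{Z}/2\mathbb{Z} \rightarrow \tau_{\leq j} R\alpha_{*}\mathbb{Z}/2\mathbb{Z}$.
\end{center}
The point is that, after the twist $\mu_2^{\otimes (j-1)} \otimes \mu_2 \cong \mu_2^{\otimes j}$, cupping with $\tau$ is the identity on étale cohomology.

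\textbf{Step 2: compute the cone.} The cone of the canonical map in Step 1 is the single sheaf $\mathcal{H}^j_{et}(\mathbb{Z}/2\mathbb{Z})$ placed in degree $j$. This is the Nisnevich sheaf associated to $U \mapsto H^j_{et}(U,\mathbb{Z}/2\mathbb{Z})$.

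\textbf{Step 3: identify the sheaf.} The norm residue isomorphism gives $\textbf{K}^M_j/2 \cong \mathcal{H}^j_{et}(\mathbb{Z}/2\mathbb{Z})$. The Orlov--Vishik--Voevodsky theorem (the Milnor conjecture on quadratic forms) gives $\textbf{K}^M_j/2 \cong \overline{\textbf{I}}^j$. Both hold first on fields and then as Nisnevich sheaves, by Gersten resolutions: these are cycle modules in Rost's sense and the isomorphisms are compatible with residues. Hence the cone is $\overline{\textbf{I}}^j[-j]$, and the long exact sequence follows.

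\textbf{Main obstacle.} The delicate point is Step 1: verifying that the Beilinson--Lichtenbaum identification transforms multiplication by $\tau$ into the truncation map, compatibly for all $j$. I would first try to reduce this to the compatibility of the motivic-to-étale comparison map with products, which gives a commutative square of comparison maps. It then suffices to check on cohomology sheaves in degrees $\leq j-1$, where both horizontal maps are isomorphisms by the Beilinson--Lichtenbaum theorem together with étale $\tau$-periodicity. The remaining case $j=0$, where $\mathbb{Z}/2\mathbb{Z}(-1)=0$ and $\overline{\textbf{I}}^0 = \mathbb{Z}/2\mathbb{Z}$, can be handled directly.
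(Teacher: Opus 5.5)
The paper does not prove this statement itself but imports it from \cite[Theorem 1.3]{T}. Your argument is essentially Totaro's proof and is correct: Beilinson--Lichtenbaum gives the triangle $\mathbb{Z}/2\mathbb{Z}(j-1)\xrightarrow{\tau}\mathbb{Z}/2\mathbb{Z}(j)\to\mathcal{H}^j_{et}[-j]$, and then the norm residue and Orlov--Vishik--Voevodsky isomorphisms, compatible with residues, identify $\mathcal{H}^j_{et}\cong\overline{\textbf{I}}^j$. Your ``check on cohomology sheaves'' also suffices as stated: $\tau$ is an isomorphism on cohomology sheaves in degrees $\leq j-1$, and $\mathbb{Z}/2\mathbb{Z}(j-1)$ has none above degree $j-1$, so the cone of $\tau$ has the single cohomology sheaf $\mathcal{H}^j(\mathbb{Z}/2\mathbb{Z}(j))$ in degree $j$ and you need no finer identification of $\tau$ with the truncation map.
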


\section{Results}\label{Results}\label{3}

In this section we prove the main results of this paper. We assume familiarity with Chow groups and Chow-Witt groups for brevity. We let $k$ be an algebraically closed base field of characteristic $0$ and we fix integers $\alpha_{1}, \alpha_{2}, \alpha_{3}, d \geq 2$ with $\alpha_{1}, \alpha_{2}, \alpha_{3}$ pairwise coprime and $gcd (\alpha_{1},d-1) = 1$. Then we consider the variety

\begin{center}
$Y := \{ x + x^d y^{\alpha_{1}} + z^{\alpha_{2}} + t^{\alpha_{3}} = 0\} \subset \mathbb{A}^{4}_{k}$.
\end{center}

If $k = \mathbb{C}$, then $Y$ is a Koras-Russell threefold by \cite[Example 7.6(2)]{KR}.

\begin{Thm}\label{Chow-3}
$CH^{i}(Y) \otimes_{\mathbb{Z}}\mathbb{Q}=0$ for $i =1,2,3$ and $CH^{3}(Y) = 0$.
\end{Thm}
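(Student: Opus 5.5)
The plan is to present $Y$ as a cyclic covering of an affine space in two different ways and apply Theorem \ref{Chow-Tensor} to each. Take $X = \mathbb{A}^3_k = Spec(k[x,y,z])$ and $f := -(x + x^d y^{\alpha_1} + z^{\alpha_2})$. With $s = \alpha_3$ and $u = t$, Definition \ref{CyclicCoverings} gives $Y_{\alpha_3} = \{t^{\alpha_3} = f\} = Y$. Symmetrically, take $X' = Spec(k[x,y,t])$ and $f' := -(x + x^d y^{\alpha_1} + t^{\alpha_3})$. With $s = \alpha_2$ and $u = z$, this exhibits $Y$ again as $Y_{\alpha_2}$.

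First I verify the General Assumptions \ref{Assumptions}, writing out the first presentation (the second is the same with the roles of $z,\alpha_2$ and $t,\alpha_3$ swapped).
\begin{itemize}
\item \emph{Smoothness of $F_0 = \{x + x^d y^{\alpha_1} + z^{\alpha_2} = 0\}$.} A singular point requires $\partial_z = \alpha_2 z^{\alpha_2-1} = 0$ and $\partial_y = \alpha_1 x^d y^{\alpha_1 - 1} = 0$. So $z = 0$, and $x = 0$ or $y = 0$. In either case $\partial_x = 1 + d x^{d-1}y^{\alpha_1} = 1 \neq 0$, so $F_0$ is smooth. Trivially $X$ is smooth.
\item \emph{Primality of $u^{\alpha_3} - f$.} It is prime in $k[x,y,z][u]$ and in $k(x,y,z)[u]$. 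By Capelli's theorem, over $k(x,y,z)$ it suffices to check that $f$ is not an $\ell$-th power for any prime $\ell \mid \alpha_3$; since $\alpha_3\ge 2$, such an $\ell$ exists. This holds because $f$ has a term of degree $1$ in $x$ and is squarefree: it is a smooth hypersurface equation. Gauss's lemma then gives primality in the polynomial ring, as $u^{\alpha_3}-f$ is monic in $u$.
\end{itemize}

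Next I construct the $\mathbb{G}_m$-action required in (b) of Theorem \ref{Chow-Tensor}. Let $\lambda$ act with weights $w_x = \alpha_1\alpha_2$, $w_y = -(d-1)\alpha_2$, $w_z = \alpha_1$. Then $x$, $x^d y^{\alpha_1}$ and $z^{\alpha_2}$ all have weight $\alpha_1\alpha_2$. So $f$ is a quasi-invariant of weight $\alpha_1\alpha_2$, and $\langle \alpha_1\alpha_2, \alpha_3\rangle = \mathbb{Z}$ by pairwise coprimality. For $X'$ use weights $w_x=\alpha_1\alpha_3$, $w_y = -(d-1)\alpha_3$, $w_t=\alpha_1$, giving weight $\alpha_1\alpha_3$, which is coprime to $\alpha_2$.

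Applying Theorem \ref{Chow-Tensor} with $R = \mathbb{Q}$ gives $CH^i(Y)\otimes\mathbb{Q} \cong CH^i(\mathbb{A}^3)\otimes \mathbb{Q} = 0$ for $i \ge 1$. For the integral statement about $CH^3$:
\begin{itemize}
\item Apply Theorem \ref{Chow-Tensor} with $R = \mathbb{Z}[1/\alpha_3]$ to the first presentation and with $R=\mathbb{Z}[1/\alpha_2]$ to the second. This gives $CH^3(Y)\otimes\mathbb{Z}[1/\alpha_3] = 0 = CH^3(Y)\otimes \mathbb{Z}[1/\alpha_2]$.
\item So every element of $CH^3(Y)$ is killed by a power of $\alpha_3$ and by a power of $\alpha_2$. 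Since $\gcd(\alpha_2,\alpha_3)=1$, it is zero.
\end{itemize}
As a fallback for $CH^3$ alone, the rational vanishing combines with Roitman-type torsion-freeness of $CH_0$ of a smooth affine variety over an algebraically closed field; the integrality would then come from there.

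The main obstacle is the careful verification of the General Assumptions, especially primality of $u^{s}-f$ in both polynomial rings. The group-theoretic steps are then immediate. Notably, the same two-cover argument seems to apply in every codimension, which would already handle $CH^1$ and $CH^2$ integrally; the separate treatment in Theorem \ref{Chow-1-2} suggests I should double-check exactly where this step might fail.
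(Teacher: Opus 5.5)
Your proof is correct, but it takes a different route from the paper.

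The paper's route is as follows. It realizes $Y$ as the $\alpha_1$-fold cyclic cover of the Koras--Russell threefold of the first kind $X=\{x+x^dy+z^{\alpha_2}+t^{\alpha_3}=0\}$, branched along $\{y=0\}$. This is where $\gcd(\alpha_1,d-1)=1$ is used: it makes the weight $-(d-1)\alpha_2\alpha_3$ of $y$ coprime to $\alpha_1$. The paper then gets $CH^i(Y)\otimes\mathbb{Q}\cong CH^i(X)\otimes\mathbb{Q}=0$ from the $\mathbb{A}^1$-contractibility of $X$ \cite{DF}. Finally it deduces $CH^3(Y)=0$ from the unique divisibility of $CH^3(Y)=CH_0(Y)$ \cite{Sr}, which is essentially your fallback.

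Your route differs in two ways. You use the two $\mathbb{A}^3$-presentations, which the paper only introduces in the proof of Theorem~\ref{Chow-1-2}. You get integrality from the coefficient rings $\mathbb{Z}[1/\alpha_3]$ and $\mathbb{Z}[1/\alpha_2]$. Theorem~\ref{Chow-Tensor} permits this, since it holds for any commutative ring $R$ with $s\in R^{\times}$.

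What each route buys:
\begin{itemize}
\item Your route is more elementary. It needs only $CH^{\geq 1}(\mathbb{A}^3)=0$ and pairwise coprimality. It does not use \cite{DF}, the Roitman--Srinivas torsion result, or $\gcd(\alpha_1,d-1)=1$.
\item The paper's choice pays off later. The $\alpha_1$-cover of the $\mathbb{A}^1$-contractible $X$ is what drives the Chow--Witt computation in Theorem~\ref{Chow-Witt}.
\end{itemize}

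Your closing worry is unfounded. The same argument gives $CH^i(Y)[1/\alpha_3]=0=CH^i(Y)[1/\alpha_2]$, hence $CH^i(Y)=0$, for every $i\geq 1$. This makes the divisibility-plus-localization-torsion argument in the proof of Theorem~\ref{Chow-1-2} unnecessary.

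One can even see this by hand. The deck group $\mu_{\alpha_3}$ of $\varphi\colon Y\to\mathbb{A}^3$ is the restriction of a $\mathbb{G}_m$-action on $Y$, so it acts trivially on $CH^*(Y)$. Hence $\varphi^*\varphi_*=\sum_{\zeta}\zeta^*$ is multiplication by $\alpha_3$. For $i\geq 1$ this map factors through $CH^i(\mathbb{A}^3)=0$, so $\alpha_3$ kills $CH^i(Y)$.

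One small point in your verification: Capelli's criterion for $u^n-a$ also excludes $a\in -4K^4$ when $4\mid n$. Since $-4$ is a square in $k$, this is already covered by $f$ not being a square.
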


\begin{proof}
First of all, the variety $Y$ can be interpreted as a cyclic covering of the Koras-Russell threefold of the first kind
\begin{center}
$X := \{x + x^d y + z^{\alpha_2} + t^{\alpha_3}=0\} \subset \mathbb{A}^4_{k}$
\end{center}
with respect to the regular function $y \in \mathcal{O}_X (X)$ of order $\alpha_1$. Furthermore, the $\mathbb{G}_{m,k}$-action on $X$ given by
\begin{center}
$(\lambda,x,y,z,t) \mapsto (\lambda^{\alpha_{2}\alpha_{3}}x,\lambda^{-(d-1)\alpha_{2}\alpha_{3}}y,\lambda^{\alpha_{3}}z,\lambda^{\alpha_{2}}t)$
\end{center}
makes $y$ a quasi-invariant of weight $-(d-1)\alpha_{2}\alpha_{3}$. Since $\alpha_{1} \in \mathbb{Q}^{\times}$, Theorem \ref{Chow-Tensor} implies that $CH^i (Y)\otimes_{\mathbb{Z}}\mathbb{Q} \cong CH^i (X)\otimes_{\mathbb{Z}}\mathbb{Q}$ for $i \geq 1$. The main result in \cite{DF} shows that $X$ is $\mathbb{A}^1$-contractible; in particular, $CH^{i}(X) = 0$ for $i=1,2,3$ and therefore $CH^{i}(Y)\otimes_{\mathbb{Z}}\mathbb{Q} = 0$ for $i=1,2,3$.\\
Now recall from \cite{Sr} that $CH^3 (Y)$ is a uniquely divisible abelian group and therefore a $\mathbb{Q}$-vector space. In particular, there is a $\mathbb{Q}$-linear isomorphism of the form $CH^3 (X) \cong \mathbb{Q}^I$ for some index set $I$. The previous paragraph shows that $CH^3 (X) \otimes_{\mathbb{Z}} \mathbb{Q} = 0$. Now it follows from the canonical isomorphism $\mathbb{Q} \otimes_{\mathbb{Z}} \mathbb{Q} \cong \mathbb{Q}$ that $0 = CH^3 (Y) \otimes_{\mathbb{Z}} \mathbb{Q} \cong \mathbb{Q}^I \otimes_{\mathbb{Z}} \mathbb{Q} \cong \mathbb{Q}^I \cong CH^3 (Y)$. This finishes the proof.
\end{proof}

\begin{Thm}\label{Chow-1-2}
$CH^{i}(Y) = 0$ for $i=1,2$.
\end{Thm}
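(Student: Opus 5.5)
The plan is to present $Y$ as a cyclic covering of affine space $\mathbb{A}^3_k$ in two different ways. We then apply Theorem \ref{Chow-Tensor} with coefficient rings $R=\mathbb{Z}[1/\alpha_{2}]$ and $R=\mathbb{Z}[1/\alpha_{3}]$, and combine the two results using $gcd(\alpha_2,\alpha_3)=1$. This avoids the finer information about $CH^{i}(Y)$ coming from the covering $Y\to X$ of Theorem \ref{Chow-3}, which only controls $CH^{i}(Y)$ after inverting $\alpha_1$.

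\textbf{Step 1: the covering of order $\alpha_2$.} Let $X_2 := \mathbb{A}^3_k = Spec(k[x,y,t])$ and $f_2 := -(x + x^d y^{\alpha_1} + t^{\alpha_3})$. Then $Y$ is exactly the subscheme $\{f_2 = z^{\alpha_2}\}$ of $X_2\times\mathbb{A}^1_k$, so $Y = Y_{\alpha_2}$ in the notation of Definition \ref{CyclicCoverings} with $u=z$.

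We check the General Assumptions \ref{Assumptions}.
\begin{itemize}
\item[(i)] $X_2$ is smooth.
\item[(ii)] $F_0=\{x+x^dy^{\alpha_1}+t^{\alpha_3}=0\}$ is smooth. If $\partial_t$ and $\partial_y$ vanish, then $t=0$ and either $x=0$ or $y=0$. In those cases $\partial_x = 1 + d x^{d-1}y^{\alpha_1} = 1$. Otherwise $x\neq 0$, $y\neq0$, and the equation forces $x^{d-1}y^{\alpha_1}=-1$, so $\partial_x=1-d\neq 0$ in characteristic $0$.
\item[(iii)] $u^{\alpha_2}-f_2$ is prime in $k[x,y,t][u]$ and in $k(x,y,t)[u]$. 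One can argue that $-f_2$ is linear in $x$ modulo other terms and not a $p$-th power in $k(x,y,t)$ for any prime $p\mid\alpha_2$; irreducibility of $u^{s}-f$ then follows from the standard criterion (Capelli), together with Gauss' lemma.
\end{itemize}

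For the $\mathbb{G}_{m,k}$-action we take
\begin{center}
$(\lambda,x,y,t)\mapsto(\lambda^{\alpha_1\alpha_3}x,\lambda^{-(d-1)\alpha_3}y,\lambda^{\alpha_1}t)$.
\end{center}
Every monomial of $f_2$ then has weight $\alpha_1\alpha_3$, which is coprime to $\alpha_2$. Hence $\langle \alpha_1\alpha_3,\alpha_2\rangle=\mathbb{Z}$.

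Applying Theorem \ref{Chow-Tensor} with $R=\mathbb{Z}[1/\alpha_2]$ gives
\begin{center}
$CH^{i}(Y)\otimes\mathbb{Z}[1/\alpha_2]\cong CH^{i}(\mathbb{A}^3_k)\otimes\mathbb{Z}[1/\alpha_2]=0$ for $i\geq 1$.
\end{center}

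\textbf{Step 2: the covering of order $\alpha_3$.} By symmetry, we view $Y$ as a covering of order $\alpha_3$ of $\mathbb{A}^3_k=Spec(k[x,y,z])$, branched along $\{x+x^dy^{\alpha_1}+z^{\alpha_2}=0\}$. The action now has weights $(\alpha_1\alpha_2,\,-(d-1)\alpha_2,\,\alpha_1)$ on $(x,y,z)$, and $f$ has weight $\alpha_1\alpha_2$, which is coprime to $\alpha_3$. The same checks as in Step 1 then give $CH^{i}(Y)\otimes\mathbb{Z}[1/\alpha_3]=0$.

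\textbf{Step 3: combining.} Let $c\in CH^{i}(Y)$ with $i\in\{1,2\}$. Since localization is exact, Step 1 says that $\alpha_2^{m}c=0$ for some $m$, and Step 2 says that $\alpha_3^{n}c=0$ for some $n$. Because $gcd(\alpha_2^m,\alpha_3^n)=1$, we can write $1=a\alpha_2^m+b\alpha_3^n$, and then $c=0$. This argument also reproves $CH^3(Y)=0$.

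The main obstacle is verifying the primality condition in the General Assumptions \ref{Assumptions} rigorously. Smoothness and the weight condition are straightforward computations.
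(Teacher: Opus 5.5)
Your proof is correct, and it takes a genuinely shorter route than the paper's.

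\textbf{What is shared.} Both arguments use the same two presentations of $Y$ as cyclic coverings of $\mathbb{A}^3_k$, of orders $\alpha_2$ and $\alpha_3$, with the same $\mathbb{G}_{m,k}$-actions.

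\textbf{What the paper does.} It applies Theorem~\ref{Chow-Tensor} only with $R=\mathbb{Z}/n\mathbb{Z}$. That shows merely that $CH^{i}(Y)$ is divisible, so the paper then has to prove that $CH^1(Y)$ and $CH^2(Y)$ have bounded exponent. This needs three further inputs:
\begin{itemize}
\item localization sequences along $F_1=\{t=0\}\subset Y$ and $F_3=\{z=0\}\subset F_2$;
\item \cite[Remark 3.21]{Sy}, saying the complements have $\alpha_3$- resp.\ $\alpha_2$-torsion Chow groups;
\item the rational vanishing of Theorem~\ref{Chow-3}.
\end{itemize}
The last of these rests on the $\alpha_1$-covering $Y\to X$ of the Dubouloz--Fasel $\mathbb{A}^1$-contractible threefold, and hence on $\gcd(\alpha_1,d-1)=1$. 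The case $CH^3$ additionally needs Srinivas' theorem.

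\textbf{What you do.} You use the flat coefficient rings $\mathbb{Z}[1/\alpha_2]$ and $\mathbb{Z}[1/\alpha_3]$. This is legitimate, since Theorem~\ref{Chow-Tensor} is stated for an arbitrary commutative ring $R$ with $s\in R^{\times}$. It shows at once that every class is killed both by a power of $\alpha_2$ and by a power of $\alpha_3$, so it vanishes.

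\textbf{What each route buys.} Your route uses only the pairwise coprimality of the $\alpha_j$. It avoids Dubouloz--Fasel, Srinivas and \cite[Remark 3.21]{Sy}, and it handles $i=3$ as well. The paper's route only ever invokes the comparison theorem with coefficients $\mathbb{Q}$ and $\mathbb{Z}/n\mathbb{Z}$, at the cost of the extra torsion-bounding argument. Its heavier inputs are needed anyway for Theorem~\ref{Chow-Witt}.

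\textbf{On the primality check.} The step you flag as the main obstacle is easy. The function $f_2$ vanishes at the origin with differential $-dx\neq 0$. A $p$-th power $h^{p}$ vanishing there has zero differential there, so $f_2$ is not a $p$-th power in the UFD $k[x,y,t]$, and therefore not in $k(x,y,t)$ either. Capelli's extra condition for $4\mid s$ is automatic, because $-4b^4=(2\sqrt{-1}\,b^2)^2$ is already a square. Note that the paper does not verify the General Assumptions \ref{Assumptions} at all, so your check is an addition rather than a gap.
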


\begin{proof}
First of all, the variety $Y$ can be interpreted as a cyclic covering of $\mathbb{A}^3_{k}$ (with coordinates $x,y,z$) with respect to the regular function
\begin{center}
$f_{1} = -(x + x^d y^{\alpha_{1}} + z^{\alpha_{2}})$
\end{center}
of order $\alpha_{3}$. The $\mathbb{G}_{m,k}$-action on $\mathbb{A}^3_{k}$ given by
\begin{center}
$(\lambda,x,y,z) \mapsto (\lambda^{\alpha_{1}\alpha_{2}}x,\lambda^{-(d-1)\alpha_{2}}y,\lambda^{\alpha_{1}}z)$
\end{center}
makes $f_{1}$ a quasi-invariant of weight $\alpha_{1}\alpha_{2}$. Therefore Theorem \ref{Chow-Tensor} implies that $CH^{i}(Y)$, $i \geq 1$, are divisible by any integer $n$ with $\langle n,\alpha_{3}\rangle=\mathbb{Z}$ (just take $R = \mathbb{Z}/n\mathbb{Z}$).\\
Analogously, the variety $Y$ can also be interpreted as a cyclic covering of $\mathbb{A}^3_{k}$ (with coordinates $x,y,t$) with respect to the regular function
\begin{center}
$f_{2} = -(x + x^d y^{\alpha_{1}} + t^{\alpha_{3}})$
\end{center}
of order $\alpha_{2}$. The $\mathbb{G}_{m,k}$-action on $\mathbb{A}^3_{k}$ given by
\begin{center}
$(\lambda,x,y,t) \mapsto (\lambda^{\alpha_{1}\alpha_{3}}x,\lambda^{-(d-1)\alpha_{3}}y,\lambda^{\alpha_{1}}t)$
\end{center}
makes $f_{2}$ a quasi-invariant of weight $\alpha_{1}\alpha_{3}$. Again, Theorem \ref{Chow-Tensor} implies that $CH^{i}(Y)$, $i \geq 1$, are also divisible by any integer $n$ with $\langle n,\alpha_{2}\rangle=\mathbb{Z}$. Since $\alpha_{2}$ and $\alpha_{3}$ are coprime, the groups $CH^{i}(Y)$, $i \geq 1$, are divisible by any prime number and hence are divisible abelian groups. As a consequence, it suffices to show that there are integers $m,n \neq 0$ such that $CH^{1}(Y)$ is $m$-torsion and $CH^{2}(Y)$ is $n$-torsion.\\
For this purpose, we let
\begin{center}
$F_{1}:= \{t=0\} \subset Y$.
\end{center}
Note that the cyclic covering $Y \rightarrow \mathbb{A}^3_{k}$ from the first paragraph maps the variety $F_{1}$ isomorphically onto the subvariety
\begin{center}
$F_{2}:=\{-(x + x^d y^{\alpha_{1}} + z^{\alpha_{2}})=0\} \subset \mathbb{A}^3_{k}$.
\end{center}
It follows from \cite[Remark 3.21]{Sy} that $CH^{i}(Y \setminus F_{1})$ is an $\alpha_{3}$-torsion group for $i \geq 1$. Now consider the localization sequence for Chow groups
\begin{center}
$CH^{0}(F_{1}) \rightarrow CH^{1} (Y) \rightarrow CH^{1} (Y\setminus F_{1}) \rightarrow 0$.
\end{center}
The group $CH^0 (F_{1}) \cong CH^0 (F_{2}) \cong \mathbb{Z}$ cannot map injectively into $CH^{1}(Y)$ because tensoring the localization sequence with $\mathbb{Q}$ would then automatically yield $CH^{1}(Y) \otimes_{\mathbb{Z}} \mathbb{Q} \neq 0$, which would be a contradiction to Theorem \ref{Chow-3}. Therefore one obtains a short exact sequence of the form
\begin{center}
$0 \rightarrow \mathbb{Z}/l\mathbb{Z} \rightarrow CH^{1} (Y) \rightarrow CH^{1} (Y\setminus F_{1}) \rightarrow 0$
\end{center}
for some integer $l \neq 0$. This implies that $CH^1 (Y)$ is $m$-torsion for $m=l\alpha_{3}$. This proves that $CH^1 (Y) = 0$.\\
It remains to be proven that $CH^2 (Y) = 0$. Note that $F_{2}$ is a cyclic covering of $\mathbb{A}^2_{k}$ of order $\alpha_{2}$ with respect to the regular function $-(x + x^d y^{\alpha_{1}})$. If we then let
\begin{center}
$F_{3}:= \{z=0\} \subset F_{2}$,
\end{center}
\cite[Remark 3.21]{Sy} therefore implies that $CH^{1}(F_{2}\setminus F_{3})$ is $\alpha_{2}$-torsion. Then we consider the exact localization sequence for Chow groups
\begin{center}
$CH^{0}(F_{3}) \rightarrow CH^{1} (F_{2}) \rightarrow CH^{1} (F_{2}\setminus F_{3}) \rightarrow 0$.
\end{center}
Note that Theorem \ref{Chow-Tensor} applied to the cyclic covering $F_{2} \rightarrow \mathbb{A}^2_{k}$ of order $\alpha_{2}$ induced by projection to the coordinates $x$ and $y$ yields
\begin{center}
$CH^{1}(F_{2}) \otimes_{\mathbb{Z}} \mathbb{Q} = 0$.
\end{center}
Note that $CH^0 (F_{3})$ is a finite direct sum $\bigoplus_{i=1}^{r} \mathbb{Z}$ of copies of $\mathbb{Z}$ and neither of the induced maps $\mathbb{Z} \hookrightarrow \bigoplus_{i=1}^{r} \mathbb{Z} \rightarrow CH^{1}(F_{1})$ can be injective because tensoring with $\mathbb{Q}$ would then yield $CH^{1}(F_{1}) \otimes_{\mathbb{Z}} \mathbb{Q} \neq 0$. So we obtain an exact sequence of the form
\begin{center}
$\bigoplus_{i=1}^{r} \mathbb{Z}/l_{i}\mathbb{Z} \rightarrow CH^{1} (F_{2}) \rightarrow CH^{1} (F_{2}\setminus F_{3}) \rightarrow 0$
\end{center}
for some integers $l_{i} \neq 0$, $1 \leq i \leq r$. This implies that $CH^{1} (F_{2})$ is $l\alpha_{2}$-torsion for $l = \prod_{i}^{r} l_{i}$. In particular, $CH^{1} (F_{1})$ is $l\alpha_{2}$-torsion (as $F_{1} \cong F_{2}$).\\
Altogether, the previous paragraphs and the exact localization sequence
\begin{center}
$CH^{1}(F_{1}) \rightarrow CH^{2} (Y) \rightarrow CH^{2} (Y\setminus F_{1}) \rightarrow 0$
\end{center}
imply that $CH^{2}(Y)$ is $n$-torsion for $n=l\alpha_{2}\alpha_{3}$. This finishes the proof.
\end{proof}

\begin{Kor}\label{VB}
All algebraic vector bundles over $Y$ are trivial.
\end{Kor}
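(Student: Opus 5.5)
The plan is to reduce the triviality of vector bundles to the vanishing of Chow groups, using the classification of algebraic vector bundles over smooth affine threefolds over an algebraically closed field, as cited in the introduction (\cite[Theorem 2.1(iii)]{KM}, \cite[Theorem 1]{AF}). By Theorems \ref{Chow-3} and \ref{Chow-1-2} we have $CH^{i}(Y)=0$ for $i=1,2,3$. So it remains to check that $Y$ satisfies the hypotheses of those classification theorems and to run through the ranks.

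First, I would record that $Y$ is a smooth affine threefold over $k$. Smoothness follows from the Jacobian criterion: $\partial/\partial x$ of the defining polynomial is $1 + d x^{d-1} y^{\alpha_1}$. Wherever this vanishes, we need $\partial/\partial y$, $\partial/\partial z$ and $\partial/\partial t$ also to vanish for a singular point. That forces $y=0$ (since $x\neq 0$ there), and then $1=0$, a contradiction. Alternatively, smoothness is implicit in the cyclic covering setup of Section \ref{2}.

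Then I would treat the ranks separately:
\begin{itemize}
\item \emph{Line bundles.} These are classified by $\mathrm{Pic}(Y)=CH^{1}(Y)=0$, so they are trivial.
\item \emph{Rank $2$.} By \cite[Theorem 2.1(iii)]{KM} (Kumar--Murthy), over a smooth affine threefold over an algebraically closed field, a rank $2$ bundle $E$ is determined by its Chern classes $(c_1(E),c_2(E))\in CH^{1}(Y)\times CH^{2}(Y)$. Both groups vanish, so $E$ is trivial. Equivalently, \cite[Theorem 1]{AF} (Asok--Fasel) gives the same classification for rank $2$ bundles on smooth affine threefolds over algebraically closed fields, once $CH^{1}$ and $CH^{2}$ vanish.
\item \emph{Rank $r\geq 3$.} By Serre's splitting theorem, a bundle of rank $> \dim Y = 3$ splits off a free summand, so it suffices to treat rank $3$. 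For rank $3$ on an affine threefold, the obstruction to splitting off a trivial line bundle is the top Chern class $c_3 \in CH^{3}(Y)$, by Murthy's splitting theorem over algebraically closed fields. Since $CH^{3}(Y)=0$, every rank $3$ bundle is $P'\oplus \mathcal{O}$ with $P'$ of rank $2$.
\end{itemize}
By the rank $2$ case, $P'$ is trivial. Inducting upward, every rank $r$ bundle is stably, hence actually, isomorphic to $\mathcal{O}^{r}$. Here cancellation is available above rank $3$ (Suslin), and the rank $3$ reduction just given covers the remaining case.

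The only real care point is making sure the cited classification theorems apply verbatim, namely smooth affine of dimension $3$ over algebraically closed $k$. I do not see any genuine obstacle: the substantive work is Theorems \ref{Chow-3} and \ref{Chow-1-2}.
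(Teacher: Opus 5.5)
Your proposal is correct and follows the paper's route. It combines $CH^{1}(Y)=CH^{2}(Y)=CH^{3}(Y)=0$ with the Kumar--Murthy and Asok--Fasel classification of bundles on smooth affine threefolds over an algebraically closed field, which you unpack rank by rank (Serre splitting down to rank $3$, the $c_{3}$ splitting criterion, then rank $2$). One correction on attribution: I believe injectivity of $(c_{1},c_{2})$ on rank $2$ bundles is the Asok--Fasel theorem, not Kumar--Murthy, whose input is the rank $\geq 3$ part; also, the closing appeal to Suslin cancellation is unnecessary, since Serre splitting already gives $P\cong P_{3}\oplus\mathcal{O}^{r-3}$.
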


\begin{proof}
It follows from \cite[Theorem 2.1(iii)]{KM} and \cite[Theorem 1]{AF} that algebraic vector bundles over $Y$ are completely classified by their rank and their Chern classes in $CH^{1}(Y), CH^{2}(Y)$ and $CH^{3}(Y)$. We refer the reader to \cite[Section 2.4]{Sy} for details on these classification results. Therefore the vanishing of the Chow groups of $Y$ directly implies that all algebraic vector bundles over $Y$ are trivial.
\end{proof}

Theorems \ref{Chow-3}, \ref{Chow-1-2} and Corollary \ref{VB} establish Theorem 2. The next result establishes Theorem 3:

\begin{Thm}\label{Chow-Witt}
Assume $\alpha_{1}$ is odd and let $\mathcal{L}$ be a line bundle over $Y$. Then $\widetilde{CH}^{i}(Y, \mathcal{L}) = 0$ for $i=1,2,3$.
\end{Thm}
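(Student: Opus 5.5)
Since $CH^1(Y)=0$ by Theorem \ref{Chow-1-2}, every line bundle on $Y$ is trivial. So it suffices to treat $\mathcal{L}=\mathcal{O}_Y$.

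The plan is to use the standard fiber product description of Chow-Witt groups. By Fasel there is an exact sequence
\begin{center}
$H^{i}_{Nis}(Y,\textbf{I}^{i+1}) \rightarrow \widetilde{CH}^{i}(Y) \rightarrow CH^{i}(Y)$.
\end{center}
Since $CH^i(Y)=0$ for $i=1,2,3$, it will suffice to show $H^{i}(Y,\textbf{I}^{i+1})=0$ for $i=1,2,3$. Over an algebraically closed field the sheaf $\textbf{I}^{j}$ vanishes on schemes of dimension $<j$, so $H^i(Y,\textbf{I}^{j})=0$ for $j>3$. This gives $H^{3}(Y,\textbf{I}^{4})=0$ directly. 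Filtering via $0\to \textbf{I}^{j+1}\to \textbf{I}^{j}\to \overline{\textbf{I}}^{j}\to 0$ and descending induction on $j$, it then suffices to show $H^{i}(Y,\overline{\textbf{I}}^{j})=0$ for the relevant pairs with $j\ge i+1$: namely $(i,j)=(1,2),(1,3),(2,3)$, and also $H^3(Y,\overline{\textbf{I}}^{3})$ if needed. The last one is a quotient of $CH^3(Y)/2=0$ via Milnor's conjecture.

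To compute these groups I will use Theorem \ref{Totaro}, which reduces the vanishing of $H^{i}(Y,\overline{\textbf{I}}^j)$ to the vanishing of $H^{i+j,j}(Y,\mathbb{Z}/2)$ and $H^{i+j+1,j-1}(Y,\mathbb{Z}/2)$. The required mod 2 motivic cohomology groups will be computed by comparing $Y$ with $X$, the Koras–Russell threefold of the first kind. Here $Y\to X$ is the cyclic covering of order $\alpha_1$ along $y$, and $\alpha_1$ is odd, so $\alpha_1\in(\mathbb{Z}/2)^\times$. Hypothesis (b) has already been verified: the weight is $-(d-1)\alpha_2\alpha_3$, and $\langle (d-1)\alpha_2\alpha_3,\alpha_1\rangle=\mathbb{Z}$ because of the coprimality assumptions. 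Since $X$ is $\mathbb{A}^1$-contractible by \cite{DF}, we have $H^{p,q}(X,\mathbb{Z}/2)=H^{p,q}(k,\mathbb{Z}/2)$, which vanishes for $p>q$ and for $p\le 0$ when $q\le 0$.

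With this in hand, Theorem \ref{Chow-Tensor} gives $H^{2i,i}(Y,\mathbb{Z}/2)=0$ for $i\ge1$. Corollary \ref{H53} then gives $H^{2i+1,i+1}(Y,\mathbb{Z}/2)\cong H^{2i+1,i+1}(X,\mathbb{Z}/2)=0$ for $i\ge1$. The indices needed are:
\begin{itemize}
\item[] for $(i,j)=(1,2)$: $H^{3,2}$ and $H^{4,1}$;
\item[] for $(1,3)$: $H^{4,3}$ and $H^{5,2}$;
\item[] for $(2,3)$: $H^{5,3}$ and $H^{6,2}$.
\end{itemize}
Groups with $p>2q$, such as $H^{4,1}$, $H^{5,2}$ and $H^{6,2}$, vanish by \cite[Vanishing Theorem 19.3]{MVW}. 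The group $H^{3,2}$ is covered by Corollary \ref{H53} with $i=1$.

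The remaining groups are of the form $H^{p,q}$ with $q<p<2q$ and $p\ne 2q-1$, namely $H^{4,3}$ and $H^{5,3}$. For these I will apply Theorem \ref{Ind-Thm} directly, which needs $H^{p-1,q-1}(Y\setminus F,\mathbb{Z}/2)=0$: that is, $H^{3,2}$ and $H^{4,2}$ of $Y\setminus\{y=0\}$. This is the main obstacle. I would try to establish these vanishings using the structure of $Y\setminus F$ as an étale $\mu_{\alpha_1}$-torsor over $X\setminus\{y=0\}$, which is a smooth affine threefold. Its mod 2 motivic cohomology is then that of $X\setminus\{y=0\}$ by a transfer argument, since $\alpha_1$ is odd. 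That in turn is computed from $X$ and the divisor $\{y=0\}\cong \mathbb{A}^1\times\{z^{\alpha_2}+t^{\alpha_3}=0\}$, using the Gysin sequence and the fact that the cuspidal curve is a cyclic cover of $\mathbb{A}^1$.

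Alternatively, I would avoid these groups by using the more refined long exact sequence and Bockstein arguments on $Y$, bounding $H^{i}(Y,\textbf{I}^{j})$ by $2$-divisibility together with finiteness of $2$-torsion. Either way, the hard step is the vanishing of $H^{4,3}(Y,\mathbb{Z}/2)$ and $H^{5,3}(Y,\mathbb{Z}/2)$.
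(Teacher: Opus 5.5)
Your reduction is sound and matches the paper's up to the last step. You use the sequence $H^i(Y,\textbf{I}^{i+1})\to\widetilde{CH}^i(Y)\to CH^i(Y)$, the vanishing of $\textbf{I}^4$, Totaro's sequence, the Vanishing Theorem for $H^{4,1},H^{5,2},H^{6,2}$, and Corollary \ref{H53} for $H^{3,2}(Y,\mathbb{Z}/2\mathbb{Z})$. With these, everything comes down to $H^{4,3}(Y,\mathbb{Z}/2\mathbb{Z})=0$ and $H^{5,3}(Y,\mathbb{Z}/2\mathbb{Z})=0$.

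There is a genuine gap: you leave exactly these two vanishings unproved, so the proposal is not yet a proof. The route you sketch for them also fails as stated, for two reasons:
\begin{itemize}
\item For the odd-degree finite étale cover $Y\setminus F\to X\setminus\{y=0\}$, pullback followed by pushforward is multiplication by $\alpha_1$. So the transfer only shows that the mod $2$ cohomology of the \emph{base} is a direct summand of that of $Y\setminus F$. It gives no vanishing on $Y\setminus F$.
\item The divisor $\{y=0\}$ is not $\mathbb{A}^1\times\{z^{\alpha_2}+t^{\alpha_3}=0\}$; that description belongs to $\{x=0\}$.
\end{itemize}

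The missing observation is that $F=\{y=0\}\subset Y$ is the graph $x=-(z^{\alpha_2}+t^{\alpha_3})$, so $F\cong\mathbb{A}^2_k$. Consider the localization sequence
\[ H^{3,2}(Y,\mathbb{Z}/2\mathbb{Z})\to H^{3,2}(Y\setminus F,\mathbb{Z}/2\mathbb{Z})\to H^{2,1}(F,\mathbb{Z}/2\mathbb{Z})\cong CH^{1}(\mathbb{A}^2_k)\otimes_{\mathbb{Z}}\mathbb{Z}/2\mathbb{Z}=0. \]
You already have $H^{3,2}(Y,\mathbb{Z}/2\mathbb{Z})=0$, so this gives $H^{3,2}(Y\setminus F,\mathbb{Z}/2\mathbb{Z})=0$. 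Theorem \ref{Ind-Thm} with $(p,q)=(4,3)$ then yields $H^{4,3}(Y,\mathbb{Z}/2\mathbb{Z})\cong H^{4,3}(X,\mathbb{Z}/2\mathbb{Z})=0$. This is precisely the paper's argument.

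For $H^{5,3}$ the hypothesis of Theorem \ref{Ind-Thm} is even easier. The group $H^{4,2}(Y\setminus F,\mathbb{Z}/2\mathbb{Z})\cong CH^{2}(Y\setminus F)\otimes_{\mathbb{Z}}\mathbb{Z}/2\mathbb{Z}$ is a quotient of $CH^{2}(Y)\otimes_{\mathbb{Z}}\mathbb{Z}/2\mathbb{Z}=0$, since restriction $CH^2(Y)\to CH^2(Y\setminus F)$ is surjective. The paper avoids $H^{5,3}$ altogether: $H^{2}(Y,\textbf{I}^{3}(\mathcal{L}))=0$ holds for any smooth affine threefold over an algebraically closed field by \cite[Proposition 5.2]{AF}.
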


\begin{proof}
For $i=1,2,3$, we consider the short exact sequences
\begin{center}
$0 \rightarrow \textbf{I}^{i+1}(\mathcal{L}) \rightarrow \textbf{K}^{MW}_{i}(\mathcal{L}) \rightarrow \textbf{K}^{M}_{i} \rightarrow 0$
\end{center}
of sheaves on the small Nisnevich site on $Y$ (e.g., cf. \cite[\S 2.1]{F}). We have identifications $\widetilde{CH}^{i}(Y, \mathcal{L}) \cong H^{i}_{Nis}(Y, \textbf{K}^{MW}_{i}(\mathcal{L}))$. Theorems \ref{Chow-3} and \ref{Chow-1-2} and the fact that $H^{i}(Y, \textbf{K}^{M}_{i})\cong CH^{i}(Y)$ then imply via the long exact sequence of sheaf cohomology groups that it suffices to show that
\begin{itemize}
\item[1)] $H^{3}(Y, \textbf{I}^{4}(\mathcal{L}))=0$,
\item[2)] $H^{2}(Y, \textbf{I}^{3}(\mathcal{L}))=0$,
\item[3)] $H^{1}(Y, \textbf{I}^{2}(\mathcal{L}))=0$.
\end{itemize}
The first vanishing statement follows from \cite[Proposition 5.1]{AF} and the second one from \cite[Proposition 5.2]{AF} as $Y$ has dimension $3$. So it remains to be proven that $H^{1}(Y, \textbf{I}^{2}(\mathcal{L}))=0$.\\
For this purpose, we consider the short exact sequences
\begin{center}
$0 \rightarrow \textbf{I}^{3}(\mathcal{L}) \rightarrow \textbf{I}^{2}(\mathcal{L}) \rightarrow \overline{\textbf{I}}^{2} \rightarrow 0$
\end{center}
of sheaves on the small Nisnevich site on $Y$ (e.g., cf. \cite[\S 2.1]{F}). Its associated long exact sequence of sheaf cohomology groups shows that it suffices to prove that
\begin{itemize}
\item[1)] $H^{1}(Y, \overline{\textbf{I}}^{2})=0$,
\item[2)] $H^{1}(Y, \textbf{I}^{3}(\mathcal{L}))=0$.
\end{itemize}
Let us first prove the first statement: If $i=1$ and $j=2$, Theorem \ref{Totaro} gives a long exact sequence of the form
\begin{center}
$... \rightarrow H^{3,1} (Y,\mathbb{Z}/2\mathbb{Z}) \rightarrow H^{3,2} (Y,\mathbb{Z}/2\mathbb{Z}) \rightarrow H^{1}_{Nis}(Y,\overline{\textbf{I}}^2) \rightarrow H^{4,1} (Y,\mathbb{Z}/2\mathbb{Z}) \rightarrow ...$.
\end{center}
One has $H^{4,1} (Y,\mathbb{Z}/2\mathbb{Z})=0$ by \cite[Vanishing Theorem 19.3]{MVW}. If we interpret $Y$ as a cyclic covering of the Koras-Russell threefold of the first kind
\begin{center}
$X := \{x + x^d y + z^{\alpha_2} + t^{\alpha_3}=0\} \subset \mathbb{A}^4_{k}$
\end{center}
of order $\alpha_1$ with respect to the regular function $y \in \mathcal{O}_X (X)$, then Corollary \ref{H53} implies that $H^{3,2} (Y,\mathbb{Z}/2\mathbb{Z})=0$, because $X$ is $\mathbb{A}^1$-contractible by \cite[Theorem 1]{DF}. Therefore $H^{1}(Y, \overline{\textbf{I}}^{2})=0$, as desired.\\
So we have reduced the proof of Theorem \ref{Chow-Witt} to verifying the second vanishing statement above, i.e., $H^{1}(Y, \textbf{I}^{3}(\mathcal{L}))=0$. Note that the short exact sequence
\begin{center}
$0 \rightarrow \textbf{I}^{4}(\mathcal{L}) \rightarrow \textbf{I}^{3}(\mathcal{L}) \rightarrow \overline{\textbf{I}}^{3} \rightarrow 0$
\end{center}
of sheaves on the small Nisnevich site of $Y$  (cf. \cite[\S 2.1]{F}) and \cite[Proposition 5.1]{AF} show that $H^{1}(Y, \textbf{I}^{3}(\mathcal{L}))\cong H^{1}(Y, \overline{\textbf{I}}^{3})$. If $i=1$ and $j=3$, Theorem \ref{Totaro} gives a long exact sequence of the form
\begin{center}
$... \rightarrow H^{4,2} (Y,\mathbb{Z}/2\mathbb{Z}) \rightarrow H^{4,3} (Y,\mathbb{Z}/2\mathbb{Z}) \rightarrow H^{1}_{Nis}(Y,\overline{\textbf{I}}^3) \rightarrow H^{5,2} (Y,\mathbb{Z}/2\mathbb{Z}) \rightarrow ...$.
\end{center}
One has $H^{5,2} (Y,\mathbb{Z}/2\mathbb{Z})=0$ by \cite[Vanishing Theorem 19.3]{MVW} and $H^{4,2} (Y,\mathbb{Z}/2\mathbb{Z}) \cong CH^{2}(Y)\otimes_{\mathbb{Z}}\mathbb{Z}/2\mathbb{Z} = 0$ by Theorem \ref{Chow-1-2}. In particular,
\begin{center}
$H^{1}(Y, \textbf{I}^{3}(\mathcal{L}))\cong H^{1}(Y, \overline{\textbf{I}}^{3})\cong H^{4,3} (Y,\mathbb{Z}/2\mathbb{Z})$
\end{center}
and it suffices to show that $H^{4,3} (Y,\mathbb{Z}/2\mathbb{Z})=0$.\\
For this purpose, we interpret $Y$ again as a cyclic covering of the Koras-Russell threefold of the first kind
\begin{center}
$X := \{x + x^d y + z^{\alpha_2} + t^{\alpha_3}=0\} \subset \mathbb{A}^4_{k}$
\end{center}
of order $\alpha_1$ with respect to the regular function $y \in \mathcal{O}_X (X)$. Then the closed subscheme
\begin{center}
$F := \{y=0\} \subset Y$
\end{center}
is isomorphic to $\mathbb{A}^2_{k}$. We first note that $H^{2,1} (Y,\mathbb{Z}/2\mathbb{Z})\cong CH^{1}(Y)\otimes_{\mathbb{Z}}\mathbb{Z}/2\mathbb{Z} = 0$ by Theorem \ref{Chow-1-2}; hence also $H^{2,1} (Y \setminus F,\mathbb{Z}/2\mathbb{Z})\cong CH^{1}(Y \setminus F)\otimes_{\mathbb{Z}}\mathbb{Z}/2\mathbb{Z} = 0$. As $\alpha_{1}$ is odd and $X$ is $\mathbb{A}^1$-contractible, we conclude by Theorem \ref{Ind-Thm} that $H^{3,2}(Y, \mathbb{Z}/2\mathbb{Z})=0$. Since $F$ is isomorphic to $\mathbb{A}^2_{k}$, we have $H^{2,1}(F, \mathbb{Z}/2\mathbb{Z}) \cong H^{2,1}(\mathbb{A}^2_{k}, \mathbb{Z}/2\mathbb{Z}) = 0$ and therefore the long exact localization sequence of motivic cohomology groups
\begin{center}
$...\rightarrow H^{1,1}(F,\mathbb{Z}/2\mathbb{Z}) \rightarrow H^{3,2}(Y,\mathbb{Z}/2\mathbb{Z}) \rightarrow H^{3,2}(Y \setminus F, \mathbb{Z}/2\mathbb{Z}) \rightarrow H^{2,1}(F,\mathbb{Z}/2\mathbb{Z}) \rightarrow ...$
\end{center}
implies that $H^{3,2}(Y \setminus F, \mathbb{Z}/2\mathbb{Z})=0$. Therefore we can apply Theorem \ref{Ind-Thm} again and deduce that $H^{4,3}(Y, \mathbb{Z}/2\mathbb{Z})=0$. This shows that
\begin{center}
$H^{1}(Y, \textbf{I}^{3}(\mathcal{L}))\cong H^{1}(Y, \overline{\textbf{I}}^{3})\cong H^{4,3} (Y,\mathbb{Z}/2\mathbb{Z}) = 0$
\end{center}
and finishes the proof of Theorem \ref{Chow-Witt}.
\end{proof}


\begin{thebibliography}{xxxxxx}
\bibitem[AF]{AF} A. Asok, J. Fasel, A cohomological classification of vector bundles on smooth affine threefolds, Duke Math. Journal 163 (2014), no. 14, 2561-2601
\bibitem[A\O]{AO} A. Asok, P. A. {\O}stv{\ae}r, $\mathbb{A}^{1}$-homotopy Theory and Contractible Varieties: A Survey, In: Neumann, F., P{\'a}l, A. (eds) Homotopy Theory and Arithmetic Geometry – Motivic and Diophantine Aspects. Lecture Notes in Mathematics, vol 2292. Springer, Cham. \url{https://doi.org/10.1007/978-3-030-78977-0_5}
\bibitem[DF]{DF} A. Dubouloz, J. Fasel, Families of $\mathbb{A}^{1}$-contractible affine threefolds, Alg. Geom. 5 (2018), no.1, 1-14
\bibitem[F]{F} J. Fasel, The projective bundle theorem for $I^j$-cohomology, J. K-Theory 11(2) (2013), 413-464
\bibitem[HK\O]{HKO} M. Hoyois, A. Krishna, P. A. {\O}stv{\ae}r, $\mathbb{A}^1$-contractibility of Koras-Russell threefolds, Algebraic Geometry 3 (2016), 407-423
\bibitem[KM]{KM} N. M. Kumar and M. P. Murthy, Algebraic cycles and vector bundles over affine three-folds, Ann. of Math. 116 (1982), no. 3, 579–591
\bibitem[KML]{KML} S. Kaliman and L. Makar-Limanov, On the Russell-Koras contractible threefolds, J. Algebraic Geom. 6 (1997), 2, 247–268
\bibitem[KR]{KR} M. Koras and P. Russell, Contractible threefolds and $\mathbb{C}^{\times}$-actions on $\mathbb{C}^3$, J. Algebraic Geom. 6 (1997), 4, 671–695
\bibitem[M]{M} M. P. Murthy. Cancellation problem for projective modules over certain affine algebras. In Algebra, arithmetic and geometry, Part I, II (Mumbai, 2000), volume 16 of Tata Inst. Fund. Res. Stud. Math., pages 493–507. Tata Inst. Fund. Res., Bombay, 2002
\bibitem[MVW]{MVW} C. Mazza, V. Voevodsky, and C. Weibel, Lecture notes on motivic cohomology, Clay Mathematics Monographs, vol. 2, American Mathematical Society, Providence, RI, 2006
\bibitem[Sr]{Sr} V. Srinivas, Torsion $0$-cycles on affine varieties in characteristic $p$, J. Algebra 120 (1989), 428-432
\bibitem[Sy]{Sy} T. Syed, Motivic cohomology of cyclic coverings, Advances in Mathematics 487 (2026), 110767
\bibitem[T]{T} B. Totaro, Non-injectivity of the map from the Witt group of a variety to the Witt group of its function field, J. Inst. Math. Jussieu 2(3) (2003), 483-493
\bibitem[Z]{Z} M. Zaidenberg, Exotic algebraic structures on affine spaces, Algebra i Analiz 11 (1999), 3-73
\end{thebibliography}
\end{document}